\documentclass[11pt,reqno]{amsart}

\usepackage[T1]{fontenc}
\usepackage[utf8]{inputenc}
\usepackage{lmodern}
\usepackage{amsmath,amssymb,amsthm,mathtools}
\usepackage{booktabs,array}
\usepackage{enumitem}
\usepackage{microtype}
\usepackage[hidelinks]{hyperref}
\usepackage[nameinlink,noabbrev]{cleveref}

\allowdisplaybreaks
\setlist[enumerate]{label=(\arabic*),leftmargin=2.2em,itemsep=2pt,topsep=4pt}

\newtheorem{theorem}{Theorem}[section]
\newtheorem{lemma}[theorem]{Lemma}
\newtheorem{proposition}[theorem]{Proposition}
\newtheorem{corollary}[theorem]{Corollary}
\theoremstyle{definition}

\newcommand{\cthreshold}{c_{\mathrm{sc}}}
\newcommand{\bK}[1]{\overleftrightarrow{K}_{#1}}
\newcommand{\dP}{\overrightarrow{P}_{3}}
\newcommand{\dV}{\overrightarrow{V}_{3}}
\newcommand{\dL}{\overrightarrow{\Lambda}_{3}}
\newcommand{\dunion}{\mathbin{\dot\cup}}
\newcommand{\Sym}{\operatorname{Sym}}
\newcommand{\Aut}{\operatorname{Aut}}
\newcommand{\can}{\operatorname{can}}

\title[Self-complementary completions on six vertices]
{Self-complementary completions of six-vertex digraphs}

\author{Xinan Dai}
\address{Key Laboratory for Information Science of Electromagnetic Waves, College of Future Information Technology, Fudan University, Shanghai, China}
\curraddr{Department of Artificial Intelligence, School of Engineering, Westlake University, Hangzhou, China}
\email{xndai23@m.fudan.edu.cn}

\author{Wenhao Deng}
\address{Department of Artificial Intelligence, School of Engineering, Westlake University, Hangzhou, China}
\email{dengwenhao@westlake.edu.cn}

\author{Yingdong Shi}
\address{School of Information Science and Technology, ShanghaiTech University, Shanghai, China}
\email{shiyd2023@shanghaitech.edu.cn}

\author{Tailin Wu}
\address{Department of Artificial Intelligence, School of Engineering, Westlake University, Hangzhou, China}
\email{wutailin@westlake.edu.cn}

\author{Yuchen Yang}
\address{Department of Artificial Intelligence, School of Engineering, Westlake University, Hangzhou, China}
\email{yangyuchen@westlake.edu.cn}

\date{27 July 2026}
\subjclass[2020]{Primary 05C20; Secondary 05C70}
\keywords{self-complementary digraph, digraph completion, digraph packing, extremal threshold}

\begin{document}

\begin{abstract}
Let \(\cthreshold(n)\) be the largest integer \(q\) such that every loopless digraph on \(n\) vertices with at most \(q\) arcs is isomorphic to a spanning subdigraph of a self-complementary digraph of order \(n\). We prove that \(\cthreshold(6)=7\). The upper bound is witnessed by
\[
\bK{3}\dunion (x\longrightarrow y\longrightarrow z),
\]
and follows from a direct argument with a self-complementing permutation. We also determine the complete eight-arc obstruction layer: it consists of five isomorphism classes, or three after converse digraphs are identified. All five are arc-minimal. Each nevertheless packs with an isomorphic copy of itself, so ordinary packing is strictly weaker than same-order self-complementary completion already at this first failure layer.
\end{abstract}

\maketitle

\section{Introduction}\label{sec:introduction}

All digraphs in this paper are finite and loopless. Opposite arcs are allowed, but repeated arcs are not. The complement \(\overline D\) of a digraph \(D\) has the same vertex set as \(D\), and an ordered pair of distinct vertices is an arc of \(\overline D\) exactly when it is not an arc of \(D\). A digraph is \emph{self-complementary} if it is isomorphic to its complement.

An \(n\)-vertex digraph \(D\) will be called \emph{completable} if it is isomorphic to a spanning subdigraph of some self-complementary digraph of order \(n\). We study the threshold
\begin{equation}\label{eq:csc-def}
\cthreshold(n)=\max\left\{q:
\begin{array}{c}
\text{every \(n\)-vertex digraph with at most \(q\) arcs}\\[-2pt]
\text{is completable}
\end{array}
\right\}.
\end{equation}
The requirement that the completion have the same order is essential: no new vertices may be added.

Benhocine and Wojda proved that \(\cthreshold(n)\ge n\) and conjectured a stronger bound, subject to a specified exceptional family \cite{BW85}. Zhang and Wei later found counterexamples on four vertices. They also proved that every \(p\)-vertex digraph with at most \(p+1\) arcs is completable, apart from a finite family of four-vertex, five-arc digraphs \cite{ZW00}. Two later formulations are relevant here: Woźniak asked whether every \(n\)-vertex digraph with at most \(2n-4\) arcs is completable \cite[p.~75, Conjecture~5.25]{Woz97}, while Zhang and Wei proposed a repaired \(2n-3\) statement allowing the previously known exceptions \cite[Conjecture~2]{ZW00}.

Both formulations fail at order six. A smallest-looking witness already has a transparent structure: a bidirected triangle, disjoint from a directed path of length two. The triangle forces its image and preimage under any candidate self-complementing permutation to be independent triples; the two endpoints of the path are then forced into both triples, leaving the middle vertex fixed. This gives the exact threshold.

\begin{theorem}\label{thm:main-threshold}
For six vertices,
\[
\cthreshold(6)=7.
\]
In particular,
\begin{equation}\label{eq:D0}
D_0=\bK{3}\dunion\dP
\end{equation}
has eight arcs and is not completable.
\end{theorem}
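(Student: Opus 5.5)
The plan has two halves: the upper bound \(\cthreshold(6)\le 7\), shown by proving that \(D_0\) is not completable, and the lower bound \(\cthreshold(6)\ge 7\), shown by checking that every six-vertex digraph with at most seven arcs is completable. Both halves rest on the standard reformulation of completability through antimorphisms.

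\emph{Reformulation.} A self-complementary digraph \(S\) on vertex set \(V\) comes with a bijection \(\sigma\colon V\to V\) (a self-complementing permutation) that maps arcs of \(S\) exactly onto non-arcs. I will use two facts:
\begin{enumerate}
\item \(\sigma\) is fixed-point-free with all cycles of even length, or has exactly one fixed point with all other cycles of even length. Since \(6-1=5\) is odd, on six vertices only the cycle types \((6)\), \((4,2)\) and \((2,2,2)\) occur.
\item Under \(\sigma\), arcs of \(S\) and non-arcs alternate along every orbit of ordered pairs, so every such orbit has even length. Conversely, any \(\sigma\) all of whose arc-orbits are even yields a self-complementary \(S\) by taking alternate arcs in each orbit, with a free choice of starting point per orbit.
\end{enumerate}
Consequently, \(D\) is completable if and only if, after relabelling \(D\), there is a permutation \(\sigma\) of one of these three cycle types such that \(\sigma^{k}(a)\notin A(D)\) for every arc \(a\in A(D)\) and every odd \(k\). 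Equivalently, no two arcs of \(D\) lie at odd distance in a common \(\sigma\)-orbit. I would state this as a lemma and prove it in a few lines. Completability is also closed under taking spanning subdigraphs.

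\emph{Upper bound.} Suppose \(D_0\subseteq S\) with \(S\) self-complementary and \(\sigma\) a self-complementing permutation for \(S\). Let \(T\) be the vertex set of the bidirected triangle. Every ordered pair inside \(T\) is an arc of \(S\), so every ordered pair inside \(\sigma(T)\) is a non-arc of \(S\). Likewise, every ordered pair inside \(\sigma^{-1}(T)\) is a non-arc, because \(\sigma\) maps pairs in \(\sigma^{-1}(T)\) to arcs in \(T\) and so they cannot themselves be arcs. Hence \(\sigma(T)\) and \(\sigma^{-1}(T)\) are independent triples in \(S\), so neither meets \(T\) in two vertices.

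Since \(|V|=6\), each of \(\sigma(T)\) and \(\sigma^{-1}(T)\) meets \(T\) in at most one vertex and so contains at least two of \(x,y,z\). The arcs \(xy\) and \(yz\) lie in \(S\), so an independent triple cannot contain \(\{x,y\}\) or \(\{y,z\}\). It must therefore contain \(\{x,z\}\), omit \(y\), and contain one vertex of \(T\).

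Next I compare the two triples. If \(\sigma(T)=\sigma^{-1}(T)\), then \(\sigma^2(T)=T\), so \(y\) lies in none of \(T,\sigma(T),\sigma^{-1}(T)\). I would then show that \(\{y\}\) is a \(\sigma\)-invariant set, so \(y\) is a fixed point. That contradicts fact (1), because the remaining five vertices would have to split into even cycles. The subcase \(\sigma(T)\neq\sigma^{-1}(T)\) needs a small extra count: the two triples share \(\{x,z\}\) but use different vertices of \(T\), and orbit counting on the cycle types \((6)\), \((4,2)\), \((2,2,2)\) should finish it. Since the case split is only three cycle types, a short check suffices.

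\emph{Lower bound.} By closure under spanning subdigraphs, it suffices to treat digraphs with exactly seven arcs on six vertices. I would enumerate these up to isomorphism, perhaps also identifying converses, and for each exhibit an explicit \(\sigma\) of type \((6)\), \((4,2)\) or \((2,2,2)\) satisfying the odd-distance criterion. Alternatively, one can take the result of Zhang and Wei for at most \(p+1=7\) arcs, \cite{ZW00}, which lists exceptions only on four vertices; if that citation applies directly, the lower bound is immediate.

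\emph{Main obstacle.} The main obstacle is the volume of the seven-arc case check if it must be done by hand. In that case I would organize it by the underlying structure, namely the number of digons and the degree sequence, and greedily place arcs into the orbits of a fixed \((2,2,2)\) or \((6)\) permutation. A computer verification is the fallback.
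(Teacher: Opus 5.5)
Your reformulation, your proof that \(\sigma(T)\) and \(\sigma^{-1}(T)\) are independent triples, and your identification of each as \(\{t,x,z\}\) with \(t\in T\) follow the paper's argument exactly. The lower bound is indeed immediate from Zhang and Wei: their exceptions have order four, so no seven-arc case check is needed.

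However, the upper bound is not completed. In the case \(\sigma(T)=\sigma^{-1}(T)\) you only say that you ``would show'' \(y\) is fixed. The case \(\sigma(T)\neq\sigma^{-1}(T)\) is deferred to an unspecified orbit count over the three cycle types. As written, the contradiction is never derived.

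The missing step is a one-line invariance argument that needs no case split. Put \(B=T\cup\{x,z\}\). You have \(\sigma(T)=\{t,x,z\}\subseteq B\), and \(\{x,z\}\subseteq\sigma^{-1}(T)\), i.e.\ \(\sigma(\{x,z\})\subseteq T\). Together these give \(\sigma(B)\subseteq B\), hence \(\sigma(B)=B\) and \(\sigma(y)=y\). This contradicts your fact (1), since none of the types \((6)\), \((4,2)\), \((2,2,2)\) has a fixed point.

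Equivalently, argue pointwise. Since \(y\notin T\), \(\sigma(y)\notin\sigma(T)\supseteq\{x,z\}\). Since \(y\notin\sigma^{-1}(T)\), \(\sigma(y)\notin T\). So \(\sigma(y)=y\).

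Your inference ``\(\sigma^2(T)=T\), so \(y\) lies in none of \(T,\sigma(T),\sigma^{-1}(T)\)'' is a non sequitur. The vertex \(y\) lies outside all three sets in every case. That is exactly why the case split, and the appeal to the cycle types, is unnecessary.
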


Here \(\dP\) is a consistently directed path on three vertices. We write \(\dV\) and \(\dL\) for the two orientations of a two-edge path with, respectively, a central source and a central sink. Let \(T_4^+\) be obtained from a directed triangle by adding a vertex that sends an arc to all three triangle vertices, and let \(T_4^-\) be its converse.

The example in \eqref{eq:D0} is one of only five obstructions at the first failing size.

\begin{theorem}\label{thm:main-classification}
Up to isomorphism, the noncompletable six-vertex digraphs with eight arcs are
\[
\bK{3}\dunion\dP,
\qquad
\bK{3}\dunion\dV,
\qquad
\bK{3}\dunion\dL,
\qquad
\bK{2}\dunion T_4^+,
\qquad
\bK{2}\dunion T_4^-.
\]
There are \(480\) labelled examples. Every member of the list is arc-minimal, and the list reduces to three classes when converse digraphs are identified.
\end{theorem}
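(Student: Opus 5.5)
We need to prove Theorem \ref{thm:main-classification}. Throughout, we use that a six-vertex digraph \(D\) is completable exactly when some permutation \(\sigma\) of its vertex set, which we may take to be a self-complementing permutation of a self-complementary digraph of order \(6\), satisfies the following: the orbits of \(\sigma\) acting on ordered pairs of distinct vertices can be two-coloured alternately, and every arc of \(D\) gets the same colour. Equivalently, for every arc \(a\) of \(D\) and every odd \(k\), the pair \(\sigma^{k}(a)\) is not an arc of \(D\).

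\textbf{Step 1: Structure of the relevant permutations.} Every pair orbit must have even length. Hence every cycle of \(\sigma\) has length divisible by \(4\), apart from at most one fixed point. For order \(6\), the cycle type of \(\sigma\) is therefore \((4,1,1)\) with the two fixed points handled suitably, or \((4,2)\), or something similar. I will first determine exactly which cycle types admit alternating colourings, using the pairs inside and between cycles; candidates are \((4,2)\) and \((4,1,1)\) once digons are taken into account. Up to conjugacy this leaves a small number of permutation classes \(\sigma_1,\dots,\sigma_r\). For each class, the forbidden condition becomes a graph \(F_\sigma\) on the \(30\) ordered pairs, joining \(a\) to \(\sigma^{k}(a)\) for odd \(k\). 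A digraph \(D\) is completable if and only if, for some labelling, its arc set is independent in \(F_\sigma\) for some \(\sigma\).

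\textbf{Step 2: Noncompletability of the five listed digraphs.} I would extend the argument sketched in the introduction for Theorem \ref{thm:main-threshold}. A bidirected triangle \(T\) forces \(\sigma(T)\) and \(\sigma^{-1}(T)\) to be independent in \(D\), since all pairs inside them are non-arcs. In \(\bK{3}\dunion X\), the arcs of the three-vertex path \(X\) lie outside \(T\), so both \(\sigma(T)\) and \(\sigma^{-1}(T)\) must be the complementary triple. The middle vertex of \(X\) is then forced to be a fixed point, and the orientation pattern of \(X\) yields a contradiction. For \(\bK{2}\dunion T_4^{\pm}\), the digon plays the role of the triangle: \(\sigma\) maps the digon to a non-adjacent pair. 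The tournament \(T_4^+\) has all six pairs adjacent among its four vertices, so that pair must meet the digon's vertex set, and a short case analysis on the cycle type from Step 1 finishes. Converse symmetry, obtained by replacing \(\sigma\) by \(\sigma^{-1}\) and reversing all arcs, halves the work.

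\textbf{Step 3: Completeness and arc-minimality.} I must show that every other eight-arc digraph on six vertices is completable, and that deleting any arc from a listed digraph gives a completable digraph. The second claim follows from the first together with \(\cthreshold(6)=7\) from Theorem \ref{thm:main-threshold}. For completeness, the cleanest rigorous route is exhaustive enumeration. I would enumerate the isomorphism classes of six-vertex digraphs with eight arcs, using canonical forms under \(\Sym(6)\), and test each against \(F_\sigma\) over all labellings and the permutation classes \(\sigma_i\). This yields exactly the five classes listed. Counting their labelled copies via \(720/|\Aut|\) gives the total \(480\).

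Concretely, \(\Aut(\bK{3}\dunion\dP)\) has order \(6\), giving \(120\) labelled copies. Each of \(\bK{3}\dunion\dV\) and \(\bK{3}\dunion\dL\) has automorphism group of order \(12\), giving \(60\) each. The automorphism group of \(\bK{2}\dunion T_4^{\pm}\) has order \(2\cdot 3=6\), giving \(120\) each. The total is \(120+60+60+120+120=480\), as claimed. The converse map swaps \(\dV\) with \(\dL\) and \(T_4^+\) with \(T_4^-\), and fixes \(\bK{3}\dunion\dP\), giving three classes.

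The main obstacle is Step 3's completeness claim. A human-readable proof would need a structural case split on the underlying components of \(D\), for example by maximum total degree, number of digons, and existence of a bidirected triangle. Each case would need an explicit \(\sigma\) and labelling, for which I would first try a fixed \((4,2)\)-type permutation with a greedy placement of digons. I expect to rely on verified computation there, supplemented by hand checks of the near-extremal cases: a bidirected triangle plus two other arcs, and a digon plus a tournament-like part.
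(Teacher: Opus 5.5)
\textbf{Step 1 gets the admissible permutations wrong, and every later step inherits the error.} Divisibility of cycle lengths by \(4\) is the condition for self-complementary undirected \emph{graphs}. There the unordered pair \(\{u,\sigma^{r/2}(u)\}\) in an \(r\)-cycle has an orbit of length \(r/2\). For digraphs, the ordered pair \((u,\sigma^{r/2}(u))\) has an orbit of length \(r\), and a \(2\)-cycle \((u\,v)\) gives the admissible orbit \(\{(u,v),(v,u)\}\). The correct condition (\cref{lem:criterion}) is that all nontrivial cycles are even and there is at most one fixed point. At order six this forces no fixed point at all and leaves exactly the types \((6)\), \((4,2)\), \((2,2,2)\), i.e.\ \(225\) permutations.

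Your candidate list is wrong in both directions. Type \((4,1,1)\) is never admissible: its two fixed points \(u,v\) give the orbit \(\{(u,v)\}\) of length one. Types \((6)\) and \((2,2,2)\) are missing. This matters because ``\(A(D)\) is independent in \(F_\sigma\)'' is equivalent to completability only when every pair-orbit of \(\sigma\) is even. For instance, take \(\sigma=(a_1\,x\,a_2\,z)\) fixing \(a_0\) and \(y\). The images of the eight arcs of \(D_0\) under \(\sigma\) and \(\sigma^{-1}=\sigma^{3}\) are:
\begin{itemize}
\item pairs between \(a_0\) and \(\{x,z\}\);
\item the pairs \((x,z)\) and \((z,x)\);
\item pairs joining \(y\) to \(a_1\) or \(a_2\).
\end{itemize}
None of these is an arc, so your test would declare the obstruction \(D_0\) completable. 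In the other direction, omitting \((6)\) and \((2,2,2)\) can misreport digraphs that are completable only through those types as obstructions. So the enumeration of Step~3, run over your permutation classes, would not certify the list or the count \(480\).

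\textbf{Step 2 also contains a false step.} In \(\bK{3}\dunion X\) the vertex set of \(X\) carries two arcs. It is therefore not independent and cannot equal \(\sigma(T)\) or \(\sigma^{-1}(T)\). The independent triples are \(\{a_i\}\cup\{\text{endpoints of }X\}\). As in \cref{prop:D0}, this makes \(T\cup\{\text{endpoints of }X\}\) invariant under \(\sigma\), so the middle vertex is fixed. That is already the contradiction, because admissible permutations of six points are fixed-point-free.

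No orientation argument is needed, so one proof covers \(\dP\), \(\dV\) and \(\dL\) together. Under your Step~1, where fixed points are allowed, no orientation argument could work: the example above satisfies every odd-power disjointness condition with \(y\) fixed. For \(\bK{2}\dunion T_4^{\pm}\) you do not supply the ``short case analysis''. The paper gives no hand proof there either and relies on the exhaustive check over all \(225\) admissible permutations.

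The remaining pieces are sound. Arc-minimality does follow immediately from \(\cthreshold(6)\ge 7\), since each one-arc deletion has seven arcs. This is a legitimate shortcut compared with the paper's \(40\) computed deletion witnesses, though it rests on Zhang--Wei's theorem rather than on the computation. Your automorphism orders, the total \(120+60+60+120+120=480\), and the converse pairing are all correct.
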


There is a related, weaker condition. Two digraphs \(D_1,D_2\) of order \(n\) \emph{pack} if a bijection \(\pi:V(D_1)\to V(D_2)\) makes \(A(D_2)\) and \(\pi(A(D_1))\) disjoint. If \(D\) is contained in a self-complementary digraph \(S\) and \(\sigma(S)=\overline S\), then \(D\) and \(\sigma(D)\) pack. The converse need not hold.

\begin{theorem}\label{thm:main-packing}
Each of the five digraphs in \cref{thm:main-classification} packs with an isomorphic copy of itself.
\end{theorem}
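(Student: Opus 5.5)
The plan is to prove \cref{thm:main-packing} by writing down one explicit permutation for each digraph and checking directly that it works. Here a packing of \(D\) with an isomorphic copy of itself means a permutation \(\pi\) of \(V(D)\) with \(A(D)\cap\pi(A(D))=\emptyset\). Converses can be handled for free: if \(\pi\) packs \(D\) with itself, the same \(\pi\) packs \(D^{\mathrm{conv}}\) with itself, because reversing every arc commutes with applying \(\pi\). So I would treat only \(\bK{3}\dunion\dP\), \(\bK{3}\dunion\dV\) and \(\bK{2}\dunion T_4^+\), and then deduce \(\bK{3}\dunion\dL\) and \(\bK{2}\dunion T_4^-\) as converses.

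For the three digraphs \(\bK{3}\dunion H\) with \(H\in\{\dP,\dV,\dL\}\), I would find a permutation that packs the \emph{underlying graph} \(K_3\dunion P_3\) with itself. This is stronger than needed, since then no two image arcs share even an unordered pair with an original arc. Label the triangle \(a,b,c\) and the path \(x,y,z\) with middle vertex \(y\). The image of the triangle must be a triple containing at most one of \(a,b,c\) and neither of the pairs \(\{x,y\}\), \(\{y,z\}\). The triple \(\{a,x,z\}\) satisfies this.

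So I take
\[
\pi(a)=a,\ \pi(b)=x,\ \pi(c)=z,\ \pi(x)=b,\ \pi(y)=y,\ \pi(z)=c .
\]
The triangle then lands on the pairs \(ax,az,xz\), and the path lands on the pairs \(by,yc\). None of these five pairs is an edge of \(K_3\dunion P_3\). This settles all three orientations of the path at once.

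For \(\bK{2}\dunion T_4^+\) an underlying-graph packing of \(K_2\dunion K_4\) is impossible, so directions must be used. The image of the \(4\)-set of \(T_4^+\) must have at least three vertices inside \(V(T_4^+)\). It also cannot contain both vertices of the \(\bK{2}\), since the bidirected pair would then meet an image arc. The idea is to send the directed triangle to its reverse and move the source vertex onto the \(\bK{2}\).

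Label the \(\bK{2}\) as \(u,v\), the triangle as \(1\to2\to3\to1\), and the source as \(0\). I take
\[
\pi(0)=u,\ \pi(u)=0,\ \pi(v)=v,\ \pi(1)=1,\ \pi(2)=3,\ \pi(3)=2 .
\]
The triangle arcs become \(1\to3\), \(3\to2\), \(2\to1\), which are exactly the reverses of the original triangle arcs. The arcs out of \(0\) become arcs out of \(u\), which are new. The bidirected pair becomes \(\{0,v\}\), which carries no arc originally. Hence \(\pi\) is a packing.

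There is no real obstacle here; the only delicate point is the \(T_4^\pm\) case. In that case no packing of the underlying graphs exists, so the orientation reversal on the triangle is essential. One must also check that \(T_4^+\) is not mapped onto itself reversed on the same four vertices, which is impossible because \(T_4^+\not\cong T_4^-\).
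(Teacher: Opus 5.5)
Your proof is correct and is essentially the paper's argument: you exhibit one packing permutation per class and check \(A(D)\cap\pi(A(D))=\varnothing\) by substitution, and under the labelling \(a,b,c,x,y,z\mapsto 0,1,2,3,4,5\) your map for the three \(\bK{3}\) digraphs is exactly the paper's \((0,3,5,1,4,2)\). The differences are minor. For \(\bK{2}\dunion T_4^{+}\) you use a different but equally valid permutation. You also obtain \(\bK{2}\dunion T_4^{-}\) from the converse symmetry, whereas the paper gives a separate certificate \((0,2,1,5,4,3)\).
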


\section{A criterion for completion}\label{sec:criterion}

Let \(V\) be a finite set and put
\[
\Omega(V)=\{(u,v)\in V\times V:u\ne v\}.
\]
A permutation \(\sigma\in\Sym(V)\) acts on ordered pairs by
\[
\sigma(u,v)=(\sigma(u),\sigma(v)).
\]
If \(\sigma(S)=\overline S\), then repeated application of \(\sigma\) alternates arcs and nonarcs along every orbit in \(\Omega(V)\). Each such orbit therefore has two parity classes, and a prescribed subdigraph may occupy at most one of them. This observation gives the following criterion, in the form used below.

\begin{lemma}[Alternating-orbit criterion]\label{lem:criterion}
A digraph \(D\) is completable if and only if there is a permutation \(\sigma\in\Sym(V(D))\) such that
\begin{enumerate}
\item every nontrivial cycle of \(\sigma\) has even length, and \(\sigma\) has at most one fixed point;
\item for every \(j\ge0\),
\begin{equation}\label{eq:odd-disjoint}
A(D)\cap \sigma^{2j+1}(A(D))=\varnothing.
\end{equation}
\end{enumerate}
Equivalently, on each \(\langle\sigma\rangle\)-orbit in \(\Omega(V(D))\), the arcs of \(D\) lie in at most one of the two alternating parity classes.
\end{lemma}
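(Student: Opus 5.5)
The plan is to prove the two directions separately. The key tool is the orbit structure of \(\langle\sigma\rangle\) acting on \(\Omega(V)\). Throughout, completability is transferred to a fixed vertex set: if \(D\) is isomorphic to a spanning subdigraph of a self-complementary \(S'\) of order \(n\), then pulling \(S'\) back along the isomorphism gives a self-complementary digraph \(S\) on \(V=V(D)\) with \(A(D)\subseteq A(S)\). So \(D\) is completable if and only if some self-complementary \(S\) on \(V\) contains \(A(D)\).

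\emph{Necessity.} Let \(S\supseteq D\) be self-complementary on \(V\) and choose \(\sigma\in\Sym(V)\) with \(\sigma(A(S))=\Omega(V)\setminus A(S)\). By induction, \(\sigma^k(A(S))\) equals \(A(S)\) for even \(k\) and its complement for odd \(k\). Hence no pair \(x\in\Omega(V)\) satisfies \(\sigma^m(x)=x\) with \(m\) odd, since then \(x\in A(S)\) if and only if \(x\notin A(S)\). So every \(\langle\sigma\rangle\)-orbit on \(\Omega(V)\) has even length. We read off condition (1) by testing specific pairs:
\begin{itemize}
\item If \(u,v\) are two fixed points, the pair \((u,v)\) is fixed, which is impossible.
\item If \(u\) lies on a cycle of odd length \(k\ge3\), the pair \((u,\sigma(u))\) is fixed by \(\sigma^k\), which is impossible.
\end{itemize}
For condition (2), we have \(A(D)\subseteq A(S)\) and \(\sigma^{2j+1}(A(D))\subseteq\sigma^{2j+1}(A(S))=\Omega(V)\setminus A(S)\), so the two sets are disjoint.

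\emph{Sufficiency.} Assume (1) and (2). First, for distinct \(u,v\) the pair \((u,v)\) is fixed by \(\sigma^m\) exactly when both \(u\) and \(v\) are. So the orbit of \((u,v)\) has length \(\operatorname{lcm}(\ell_u,\ell_v)\), where \(\ell_u,\ell_v\) are the lengths of the cycles of \(\sigma\) containing \(u\) and \(v\). By (1), at most one of \(u,v\) is fixed and every nontrivial cycle is even, so this length is even.

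On an orbit \(O=\{x,\sigma x,\dots,\sigma^{2m-1}x\}\), the two classes \(\{\sigma^{2i}x\}\) and \(\{\sigma^{2i+1}x\}\) are therefore well defined and interchanged by \(\sigma\). Next, suppose two arcs \(a,b\in A(D)\cap O\) lie in different classes. Then \(b=\sigma^t(a)\) for some odd \(t\in[0,2m)\), so \(b\in A(D)\cap\sigma^{t}(A(D))\), contradicting (2). Hence the arcs of \(D\) in each orbit lie in a single class, which is also the equivalent reformulation in the lemma.

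Now build \(S\) by choosing, in every orbit, the class containing the arcs of \(D\), or an arbitrary class if there are none. Then \(A(D)\subseteq A(S)\). Since \(\sigma\) swaps the two classes in each orbit, \(\sigma(A(S))\) is exactly the union of the unchosen classes, namely \(\Omega(V)\setminus A(S)\). Thus \(S\) is self-complementary and \(D\) is completable.

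I do not expect a real obstacle. The only point needing care is the bookkeeping that odd exponents \(t\) taken modulo the even orbit length stay odd, together with the lcm computation that makes the parity classes well defined.
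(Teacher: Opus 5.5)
Your proof is correct and takes essentially the same route as the paper. Necessity comes from the alternation of membership in \(A(S)\) along \(\sigma\)-orbits, which forces even orbit lengths; condition (1) then follows from the pairs \((u,v)\) and \((u,\sigma(u))\), and condition (2) from odd powers. Sufficiency comes from the lcm computation of orbit lengths followed by choosing, orbit by orbit, the parity class that contains the arcs of \(D\). Your explicit relabelling to a fixed vertex set, and your check that an odd exponent stays odd modulo an even orbit length, fill in small details the paper leaves implicit.
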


\begin{proof}
Suppose first that \(D\subseteq S\) after relabelling, where \(S\) is self-complementary, and choose \(\sigma\) with \(\sigma(S)=\overline S\). Membership in \(A(S)\) changes each time \(\sigma\) is applied. Hence every orbit of \(\sigma\) on \(\Omega(V(D))\) has even length: an odd orbit would return to its initial ordered pair after changing its membership status an odd number of times. It follows at once that every odd power of \(\sigma\) sends arcs of \(S\), and therefore arcs of \(D\), to nonarcs of \(S\), which proves \eqref{eq:odd-disjoint}.

It remains to read the even-orbit condition on the vertex cycles of \(\sigma\). A vertex cycle of odd length \(r>1\) would make the orbit of \((v,\sigma(v))\) have the same odd length \(r\). Two fixed points \(u,v\) would make \((u,v)\) an orbit of length one. Thus every nontrivial vertex cycle is even and there is at most one fixed point, proving~(1).

Conversely, assume that \(\sigma\) satisfies~(1) and~(2). Condition~(1) implies that every orbit on ordered pairs has even length. Indeed, two vertices in the same nontrivial cycle give an orbit whose length divides that even cycle length and, for distinct vertices in one cycle, is in fact the cycle length; vertices in different cycles give an orbit of length equal to the least common multiple of the two cycle lengths. Since there cannot be two fixed points, at least one of those lengths is even.

Split each ordered-pair orbit into its even and odd positions. By~(2), the arc set of \(D\) meets at most one parity class on each orbit. On an occupied orbit choose the class containing the prescribed arcs; on an empty orbit choose either class. Let \(S\) be the digraph whose arc set is the union of the chosen classes. Then \(D\subseteq S\), while \(\sigma\) exchanges the chosen and unchosen classes on every orbit. Hence \(\sigma(S)=\overline S\), so \(S\) is a self-complementary completion of \(D\).
\end{proof}

On six vertices, condition~(1) permits only the cycle types
\[
(6),\qquad (4,2),\qquad (2,2,2).
\]
Their numbers are
\[
(6-1)!=120,
\qquad
\frac{6!}{4\cdot2}=90,
\qquad
\frac{6!}{2^3 3!}=15,
\]
respectively. Thus there are exactly \(225\) admissible permutations at order six.

\section{A small obstruction}\label{sec:elementary}

Let \(A=\{a_0,a_1,a_2\}\) span a bidirected triangle, and let \(x,y,z\) form a directed path \(x\to y\to z\). There are no arcs between the two components. The resulting digraph is \(D_0\) from \eqref{eq:D0}. Here an \emph{independent} set means a set containing no arc in either direction between two of its vertices.

\begin{proposition}\label{prop:D0}
The digraph \(D_0\) is not completable.
\end{proposition}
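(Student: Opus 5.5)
We use \cref{lem:criterion} and argue by contradiction. Suppose \(\sigma\in\Sym(V(D_0))\) satisfies conditions~(1) and~(2) there. The plan is to use only the \(j=0\) case of \eqref{eq:odd-disjoint}, namely \(A(D_0)\cap\sigma(A(D_0))=\varnothing\), to force \(\sigma\) to fix the middle vertex \(y\) of the path. A parity count then contradicts condition~(1).

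\emph{Step 1: two independent triples.} The bidirected triangle on \(A\) contains all six ordered pairs of distinct vertices of \(A\). So \(\sigma(A(D_0))\) contains every ordered pair of distinct vertices of \(\sigma(A)\). Since these pairs must avoid \(A(D_0)\), the set \(\sigma(A)\) is independent in \(D_0\). The \(j=0\) condition is equivalent to \(\sigma^{-1}(A(D_0))\cap A(D_0)=\varnothing\), so the same argument shows that \(\sigma^{-1}(A)\) is independent.

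\emph{Step 2: classify the independent triples.} An independent triple contains at most one vertex of \(A\), since any two vertices of \(A\) are joined. Among \(x,y,z\), the only independent pair is \(\{x,z\}\): \(x\) and \(y\) are joined, and so are \(y\) and \(z\). Hence every independent triple has the form \(\{a_i,x,z\}\). It follows that
\[
\sigma(A)=\{a_i,x,z\}\quad\text{and}\quad \sigma^{-1}(A)=\{a_k,x,z\}
\]
for some indices \(i,k\).

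\emph{Step 3: \(y\) is fixed.} Since \(x,z\in\sigma^{-1}(A)\), we have \(\sigma(x),\sigma(z)\in A\). So \(\sigma\) maps the five-element set \(A\cup\{x,z\}\) into \(A\cup\sigma(A)=A\cup\{x,z\}\). Being injective, \(\sigma\) maps \(A\cup\{x,z\}\) onto itself. The remaining vertex is therefore fixed: \(\sigma(y)=y\).

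\emph{Step 4: parity contradiction.} By Step~3, \(\sigma\) restricts to a permutation of the five vertices \(A\cup\{x,z\}\). Condition~(1) allows at most one fixed point, and \(y\) is already fixed. So this restriction has no fixed points, and all its cycles must have even length. Even lengths cannot sum to \(5\), which is a contradiction. Hence no admissible \(\sigma\) exists, and \(D_0\) is not completable.

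The only step needing care is Step~1: we must check that the \(j=0\) condition constrains \(\sigma^{-1}(A)\) as well as \(\sigma(A)\). This follows by applying \(\sigma^{-1}\) to the disjointness \(A(D_0)\cap\sigma(A(D_0))=\varnothing\). Everything after that is the elementary bookkeeping above.
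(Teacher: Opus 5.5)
Your proof is correct and follows the paper's argument step for step: independence of $\sigma(A)$ and $\sigma^{-1}(A)$, the classification of the independent triples as $\{a_i,x,z\}$, the invariant five-set $A\cup\{x,z\}$, and the resulting fixed vertex $y$. The one small difference is how you get independence of $\sigma^{-1}(A)$: you apply $\sigma^{-1}$ to the $j=0$ disjointness, while the paper writes $\sigma^{-1}=\sigma^{L-1}$ with $L-1$ odd and uses an odd power; your route is slightly more direct and equally valid.
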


\begin{proof}
Assume that a permutation \(\sigma\) satisfies \cref{lem:criterion}. The six ordered pairs between distinct vertices of \(A\) are all arcs of \(D_0\). Therefore \(\sigma(A)\) must be independent: otherwise an arc inside \(\sigma(A)\) would lie in both \(A(D_0)\) and \(\sigma(A(D_0))\), contrary to \eqref{eq:odd-disjoint} with \(j=0\).

The order \(L\) of \(\sigma\) is even, since \(\sigma\) has only even cycles on six vertices. Thus \(\sigma^{-1}=\sigma^{L-1}\), where \(L-1\) is odd. Applying \eqref{eq:odd-disjoint} to this odd power shows in the same way that \(\sigma^{-1}(A)\) is independent.

The independent triples of \(D_0\) are exactly
\begin{equation}\label{eq:independent-triples}
\{a_i,x,z\},\qquad i=0,1,2.
\end{equation}
Indeed, such a triple contains at most one vertex of the bidirected triangle, and \(x,z\) are the only nonadjacent pair among the path vertices. Hence, for some \(i,j\in\{0,1,2\}\),
\[
\sigma(A)=\{a_i,x,z\},
\qquad
\sigma^{-1}(A)=\{a_j,x,z\}.
\]
Set \(B=A\cup\{x,z\}\). The first equality gives \(\sigma(A)\subseteq B\). Applying \(\sigma\) to the second gives
\[
A=\{\sigma(a_j),\sigma(x),\sigma(z)\},
\]
so \(\sigma(\{x,z\})\subseteq A\). Consequently \(\sigma(B)\subseteq B\). Since \(\sigma\) is a bijection and \(|B|=5\), we have \(\sigma(B)=B\). The remaining vertex \(y\) is therefore fixed, contradicting condition~(1) of \cref{lem:criterion} at even order.
\end{proof}

The argument depends on the unique nonadjacent pair \(\{x,z\}\) in the path component. Both the image and the preimage of the bidirected triangle are forced to contain this pair, and the middle vertex is left outside an invariant five-set.

\section{The six-vertex threshold}\label{sec:threshold}

\begin{proof}[Proof of \cref{thm:main-threshold}]
Zhang and Wei's theorem implies that every six-vertex digraph with at most seven arcs is completable, since their exceptions have order four. Hence \(\cthreshold(6)\ge7\). On the other hand, \(D_0\) has eight arcs and is not completable by \cref{prop:D0}, so \(\cthreshold(6)\le7\). Therefore \(\cthreshold(6)=7\).
\end{proof}

\begin{corollary}\label{cor:conjectures}
The digraph \(D_0\) disproves both Woźniak's \(2n-4\) conjecture and Zhang--Wei's repaired \(2n-3\) conjecture. Moreover, six is the first order at which the exception-free \(2n-4\) statement fails.
\end{corollary}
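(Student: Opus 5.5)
The corollary has three parts: the two disproofs follow from \cref{prop:D0} by arithmetic, and the ``first order'' claim needs a check at orders at most five. For \(n=6\) we have \(2n-4=8\), and \(D_0\) is a six-vertex digraph with exactly eight arcs that is not completable, so Woźniak's \(2n-4\) conjecture fails directly. For the Zhang--Wei repaired statement, \(2n-3=9\), and \(D_0\) has \(8\le 9\) arcs. Their exceptional family consists only of four-vertex, five-arc digraphs, and \(D_0\) has order six, so \(D_0\) is not an allowed exception. Hence it is a counterexample to that conjecture too.

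For the minimality claim, I read ``exception-free \(2n-4\) statement'' as: every \(n\)-vertex digraph with at most \(2n-4\) arcs is completable. I must show this holds for every \(n\le5\).

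\begin{itemize}
\item For \(n\le 3\) we have \(2n-4\le 2\le n+1\).
\item For \(n=5\) we have \(2n-4=6=n+1\).
\item In both cases the Zhang--Wei theorem applies, since every \(p\)-vertex digraph with at most \(p+1\) arcs is completable apart from four-vertex exceptions.
\item For \(n=4\) we have \(2n-4=4\le 5=n+1\). The Zhang--Wei exceptions all have five arcs, so none has at most four arcs, and the statement holds.
\item For \(n=1\) the bound is negative or zero and the claim is vacuous or trivial. For \(n=2,3\) the \(n+1\) bound covers it.
\end{itemize}

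So the \(2n-4\) statement holds for all \(n\le5\) and fails at \(n=6\).

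The only point needing care is confirming the exact form of the Zhang--Wei exceptional family, namely that every exception has exactly five arcs on four vertices. That is what makes \(n=4\) safe, and it is cited from \cite{ZW00}. A self-contained alternative is to run \cref{lem:criterion} over all four-vertex digraphs with at most four arcs. The admissible permutations there have cycle types \((4)\) and \((2,2)\), so this is a small finite check.
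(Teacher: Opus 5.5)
Your proposal is correct and follows the paper's route: at $n=6$, $D_0$ has $8=2n-4\le 2n-3$ arcs and is not a four-vertex exception, and orders $n\le5$ are settled by the cited lower bounds. The one difference is that the paper covers $n\le4$ with Benhocine--Wojda's bound $\cthreshold(n)\ge n$ (since $2n-4\le n$ there), which removes your need to check the Zhang--Wei exceptional family at $n=4$. Also, an $n+1$ bound cannot hold literally at $n=2,3$: a self-complementary digraph of order $n$ has only $n(n-1)/2$ arcs, so those orders are better dismissed directly, e.g.\ every three-arc digraph on three vertices is already self-complementary.
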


\begin{proof}
At \(n=6\), the digraph \(D_0\) has \(8=2n-4\) arcs, is not completable, and is not one of Zhang and Wei's four-vertex exceptions. It therefore contradicts both statements.

For \(n\le4\), one has \(2n-4\le n\), so the result of Benhocine and Wojda covers the proposed range. At \(n=5\), the bound is \(2n-4=6=n+1\), and Zhang and Wei's theorem applies because its exceptional family has order four. Thus no smaller order can fail.
\end{proof}

\section{The eight-arc obstructions}\label{sec:classification}

The direct argument above identifies one obstruction. At order six the alternating-orbit criterion is also small enough for an exhaustive check: only the \(225\) admissible permutations need to be considered. The resulting five classes are listed in \cref{tab:classes}; precise details of the enumeration are given in \cref{app:enumeration}.

\begin{table}[ht]
\caption{The five isomorphism classes of eight-arc obstructions. Counts refer to a fixed labelled six-element vertex set.}
\label{tab:classes}
\centering
\small
\setlength{\tabcolsep}{4pt}
\begin{tabular}{@{}lrrrl@{}}
\toprule
Digraph & \(|\Aut(D)|\) & Labelled copies & Packing maps & Converse \\
\midrule
\(\bK{3}\dunion\dP\) & 6  & 120 & 36 & itself \\
\(\bK{3}\dunion\dV\) & 12 & 60  & 36 & \(\bK{3}\dunion\dL\) \\
\(\bK{3}\dunion\dL\) & 12 & 60  & 36 & \(\bK{3}\dunion\dV\) \\
\(\bK{2}\dunion T_4^+\) & 6 & 120 & 48 & \(\bK{2}\dunion T_4^-\) \\
\(\bK{2}\dunion T_4^-\) & 6 & 120 & 48 & \(\bK{2}\dunion T_4^+\) \\
\bottomrule
\end{tabular}
\end{table}

\begin{proof}[Proof of \cref{thm:main-classification}]
Fix \(V=\{0,1,2,3,4,5\}\) and order the \(30\) possible arcs lexicographically. For each admissible permutation \(\sigma\), decompose \(\Omega(V)\) into \(\langle\sigma\rangle\)-orbits and split each orbit into its two alternating classes. By \cref{lem:criterion}, an arc set \(M\subseteq\Omega(V)\) is completable if and only if, for at least one admissible \(\sigma\), it meets at most one parity class on every orbit.

This predicate was evaluated for all
\[
\binom{30}{8}=5{,}852{,}925
\]
eight-arc sets. Exactly \(480\) fail for every admissible permutation. For each rejected set \(M\), define
\[
\can(M)=\min_{\pi\in S_6}\pi(M)
\]
with respect to the fixed lexicographic bit order. Two labelled digraphs have the same canonical mask exactly when they are isomorphic. The \(480\) rejected masks yield five canonical masks, represented by the five digraphs in the theorem.

Their orbit sizes under \(S_6\) are
\[
120,\quad 60,\quad 60,\quad 120,\quad 120,
\]
which sum to \(480\) and agree independently with \(6!/|\Aut(D)|\) using the automorphism orders in \cref{tab:classes}. Taking converses fixes the \(\bK{3}\dunion\dP\) class and exchanges the two remaining pairs.

For each representative \(D\) and each arc \(e\in A(D)\), the computation also records an admissible permutation satisfying \cref{lem:criterion} for \(D-e\). Thus every one-arc deletion is completable. Since every spanning subdigraph of a completable digraph is again completable, each of the five obstructions is arc-minimal.
\end{proof}

The complete source code, its output, and the \(40\) deletion witnesses used in the last paragraph are included as ancillary files. As a calibration independent of the six-vertex totals, the same program at order four returns \(288\) labelled five-arc obstructions in \(14\) isomorphism classes, matching the exceptional family described by Zhang and Wei.

\section{Ordinary packing}\label{sec:packing-separation}

A packing map for a digraph \(D\) is a permutation \(\pi\in S_6\) such that
\[
A(D)\cap\pi(A(D))=\varnothing.
\]
The permutations in \cref{tab:packing-certificates} give packing maps for the labelled representatives in \cref{app:representatives}.

\begin{table}[ht]
\caption{Packing maps, written as \((\pi(0),\ldots,\pi(5))\).}
\label{tab:packing-certificates}
\centering
\small
\begin{tabular}{@{}ll@{}}
\toprule
Digraph & Packing map \\
\midrule
\(\bK{3}\dunion\dP\) & \((0,3,5,1,4,2)\) \\
\(\bK{3}\dunion\dV\) & \((0,3,5,1,4,2)\) \\
\(\bK{3}\dunion\dL\) & \((0,3,5,1,4,2)\) \\
\(\bK{2}\dunion T_4^+\) & \((0,2,1,3,5,4)\) \\
\(\bK{2}\dunion T_4^-\) & \((0,2,1,5,4,3)\) \\
\bottomrule
\end{tabular}
\end{table}

\begin{proof}[Proof of \cref{thm:main-packing}]
For each row of \cref{tab:packing-certificates}, direct substitution in the corresponding arc list gives
\[
A(D)\cap\pi(A(D))=\varnothing.
\]
Thus every obstruction packs with an isomorphic copy of itself.
\end{proof}

The difference from self-complementary completion lies in the coherence imposed by one permutation. Ordinary packing asks only for one disjoint relabelling of \(D\). The criterion in \cref{lem:criterion}, by contrast, requires every odd power of a single admissible permutation to avoid the prescribed arcs. The five obstructions satisfy the former condition but not the latter. They do not improve the general ordinary-packing bounds studied by Görlich and Żak \cite{GZ10}.

\appendix

\section{Labelled representatives}\label{app:representatives}

All representatives use \(V=\{0,1,2,3,4,5\}\). Put
\[
K_3^{\leftrightarrow}=\{(i,j):0\le i,j\le2,\ i\ne j\}.
\]
Then
\begin{align*}
A(\bK{3}\dunion\dP)
  &=K_3^{\leftrightarrow}\cup\{(3,4),(4,5)\},\\
A(\bK{3}\dunion\dV)
  &=K_3^{\leftrightarrow}\cup\{(4,3),(4,5)\},\\
A(\bK{3}\dunion\dL)
  &=K_3^{\leftrightarrow}\cup\{(3,4),(5,4)\},\\
A(\bK{2}\dunion T_4^+)
  &=\{(0,1),(1,0),(2,3),(3,4),(4,2),\\[-2pt]
  &\hspace{8em}(5,2),(5,3),(5,4)\},\\
A(\bK{2}\dunion T_4^-)
  &=\{(0,1),(1,0),(2,3),(3,4),(4,2),\\[-2pt]
  &\hspace{8em}(2,5),(3,5),(4,5)\}.
\end{align*}

With the lexicographic arc order \((0,1),(0,2),\ldots,(5,4)\), the corresponding canonical decimal masks are shown in \cref{tab:canonical-masks}. They are included only to make the isomorphism reduction easy to reproduce.

\begin{table}[ht]
\caption{Canonical masks in the fixed \(30\)-bit encoding.}
\label{tab:canonical-masks}
\centering
\small
\begin{tabular}{@{}lr@{}}
\toprule
Class & Canonical mask \\
\midrule
\(\bK{3}\dunion\dP\) & 1,251,536 \\
\(\bK{3}\dunion\dV\) & 123,045 \\
\(\bK{3}\dunion\dL\) & 3,686,697 \\
\(\bK{2}\dunion T_4^+\) & 35,928,208 \\
\(\bK{2}\dunion T_4^-\) & 1,659,464 \\
\bottomrule
\end{tabular}
\end{table}

\section{Enumeration}\label{app:enumeration}

Represent an arc set by a \(30\)-bit integer in the arc order above. For an admissible permutation \(\sigma\), let
\[
\mathcal C_\sigma=\{(O_0,O_1):O\text{ is an orbit of }\langle\sigma\rangle\text{ on }\Omega(V)\},
\]
where \(O_0\) and \(O_1\) are the two alternating classes of \(O\). A mask \(M\) is accepted for \(\sigma\) precisely when
\begin{equation}\label{eq:mask-predicate}
\text{for every }(O_0,O_1)\in\mathcal C_\sigma,
\qquad
M\cap O_0=\varnothing\quad\text{or}\quad M\cap O_1=\varnothing.
\end{equation}
It is an obstruction precisely when \eqref{eq:mask-predicate} fails for every admissible \(\sigma\).

The following pseudocode specifies the search and the isomorphism quotient.

\begingroup
\footnotesize
\begin{verbatim}
constraints = []
for sigma in permutations({0,...,5}):
    if cycle_type(sigma) in {(6),(4,2),(2,2,2)}:
        constraints.append(orbit_parity_pairs(sigma))

obstructions = []
for M in weight_eight_masks(30):
    if all(any((M intersects E) and (M intersects O)
               for (E,O) in C)
           for C in constraints):
        obstructions.append(M)

canonical_counts = frequency_table(
    min(relabel(M, pi) for pi in SymmetricGroup(6))
    for M in obstructions
)
\end{verbatim}
\endgroup

For an accepted pair \((M,\sigma)\), a completion can be recovered orbit by orbit: choose the parity class met by \(M\) on every occupied orbit, and either class on an empty orbit. The implementation uses this construction to record an accepting permutation for every one-arc deletion of the five representatives. All operations are on finite integer masks; no random or floating-point step enters the predicate.

The independently rerun computation produced the following totals:
\[
\begin{aligned}
&225\text{ admissible permutations},
&&5{,}852{,}925\text{ candidates},\\
&480\text{ labelled obstructions},
&&5\text{ isomorphism classes}.
\end{aligned}
\]
It also returned \(36,36,36,48,48\) packing maps for the five representatives in the order displayed in \cref{tab:classes}, and \(288\) labelled obstructions in \(14\) classes in the four-vertex calibration.

\section*{Statement on AI-assisted discovery}
The TARS agent system was used in the initial search for small counterexamples and in organizing the first finite enumeration. Xinan Dai completed proof reconstruction, literature review and manuscript writing and verified all mathematical results manually.

\bibliographystyle{amsplain}
\bibliography{references}

@article{BW85,
  author  = {Benhocine, A. and Wojda, A. P.},
  title   = {On self-complementation},
  journal = {Journal of Graph Theory},
  volume  = {9},
  number  = {3},
  pages   = {335--341},
  year    = {1985},
  doi     = {10.1002/jgt.3190090305},
  url     = {https://doi.org/10.1002/jgt.3190090305},
  note    = {doi: \href{https://doi.org/10.1002/jgt.3190090305}{10.1002/jgt.3190090305}}
}

@book{Woz97,
  author    = {Wo{\'z}niak, Mariusz},
  title     = {Packing of Graphs},
  series    = {Dissertationes Mathematicae},
  volume    = {362},
  publisher = {Institute of Mathematics, Polish Academy of Sciences},
  address   = {Warsaw},
  pages     = {1--78},
  year      = {1997},
  url       = {https://pldml.icm.edu.pl/pldml/element/bwmeta1.element.zamlynska-b3a7d237-fb32-41fe-a7ec-fd4f3663ca9e},
  note      = {\href{https://pldml.icm.edu.pl/pldml/element/bwmeta1.element.zamlynska-b3a7d237-fb32-41fe-a7ec-fd4f3663ca9e}{PLDML record}}
}

@article{ZW00,
  author  = {Zhang, Yunqing and Wei, Xiansun},
  title   = {{Yi lei youxiangtu de keruqianxing} [{Embeddability} of a class of digraphs]},
  journal = {Journal of Shaanxi Normal University (Natural Science Edition)},
  volume  = {28},
  number  = {4},
  pages   = {19--22},
  year    = {2000},
  issn    = {1672-4291},
  note    = {In Chinese; \href{https://www.cqvip.com/doc/journal/4873736}{CQVIP record}},
  url     = {https://www.cqvip.com/doc/journal/4873736}
}

@article{GZ10,
  author  = {G{\"o}rlich, Agnieszka and {\.Z}ak, Andrzej},
  title   = {On Packable Digraphs},
  journal = {SIAM Journal on Discrete Mathematics},
  volume  = {24},
  number  = {2},
  pages   = {552--557},
  year    = {2010},
  doi     = {10.1137/090748056},
  url     = {https://doi.org/10.1137/090748056},
  note    = {doi: \href{https://doi.org/10.1137/090748056}{10.1137/090748056}}
}

\end{document}